\documentclass[12pt,oneside]{amsart}

\usepackage{amssymb}
\usepackage[margin=1in]{geometry}  
\usepackage{enumerate}

\usepackage{ulem}      
\usepackage{amsmath, amsthm, amssymb}
\usepackage{xspace}
\usepackage{mathrsfs}
 \usepackage{graphicx}  
 \usepackage{color}	
 \usepackage{mathabx}
\newtheorem{theorem}{Theorem}[section]

\newtheorem{lemma}[theorem]{Lemma}

\newtheorem{example}[theorem]{Example}

\newcommand{\scr}{\mathscr}   
   
      \makeatletter
      \def\@setcopyright{}
      \def\serieslogo@{}
      \makeatother

\begin{document}

   \author{Amin Bahmanian}
   \address{Department of Mathematics and Statistics, 221 Parker Hall\\
 Auburn University, Auburn, AL USA   36849-5310}
   \email{mzb0004@auburn.edu}

   \author{C. A. Rodger}
   \address{Department of Mathematics and Statistics, 221 Parker Hall\\
 Auburn University, Auburn, AL USA   36849-5310}
   \email{rodgec1@auburn.edu}

   \title[ Embedding factorizations for 3-uniform hypergraphs]{ Embedding factorizations for 3-uniform hypergraphs}

   \begin{abstract}
In this paper, two results are obtained on a hypergraph embedding problem. The proof technique is itself of interest, being the first time amalgamations have been used to address the embedding of hypergraphs.

The first result finds necessary and sufficient conditions for the embedding a hyperedge-colored copy of the complete 3-uniform hypergraph of order $m$, $K_m^3$, into an $r$-factorization of  $K_n^3$, providing that $n> 2m+(-1+\sqrt{8m^2-16m-7})/2$. 

The second result finds necessary and sufficient conditions for an embedding when not only are the colors of the hyperedges of $K_m^3$ given, but also the colors of all the ``pieces" of hyperedges on these $m$ vertices are prescribed (the ``pieces" of hyperedges are eventually extended to hyperedges of size 3 in $K_n^3$ by adding new vertices to the hyperedges of size 1 and 2 during the embedding process).     

Both these results make progress towards settling an old question of Cameron on completing partial 1-factorizations of hypergraphs.

   \end{abstract}

   \keywords{Keyword. amalgamations, detachments, complete 3-uniform hypergraphs, embedding, factorization, decomposition}

   \date{\today}

   \maketitle

\section {Introduction}

A \textit{k-hyperedge-coloring} of a hypergraph $\scr G$ (we give a  precise definition of a hypergraph in the next section) is a mapping $K:E(\scr G)\rightarrow C$, where $C$ is a set of $k$ \textit{colors} (often we use $C=\{1,\ldots,k\}$), and the hyperedges of one color are said to form a \textit{color class}.  Let $\scr G$ be a hypergraph, and let $\scr H$
be a family of hypergraphs. We say that $\scr G$ has an $\scr H$-decomposition if there exists a partition $\{E(\scr H_{1}),\ldots,E(\scr H_{m})\}$ of $E(\scr G)$ such that $\scr H_{i}$ is isomorphic to a hypergraph in $\scr H$ for $1\leq i\leq m$.

The general setting for this paper is as follows. Let $\scr H$ and  $\scr H^*$ be two families of hypergraphs. Given a hypergraph $\scr G$ with an $\scr H$-decomposition and a hypergraph $\scr G^*$ which is a super-hypergraph of $\scr G$, under what circumstances can one extend the $\scr H$-decomposition of $\scr G$ into an $\scr H^*$-decomposition of $\scr G^*$? 
In other words, given a hyperedge-coloring of $\scr G$ in which each color class induces a hypergraph in $\scr H$, is it possible to extend this coloring to a hyperedge-coloring of $\scr G^*$ so that each color class of $\scr G^*$ induces a hypergraph in $\scr H^*$? Most naturally, $\scr G$ is usually taken to be the complete $h$-uniform hypergraph on $m$ vertices, $K_m^h$. 

Solving this problem requires knowledge about   hypergraph decompositions; compared to graph decompositions, very little is known about these,  even for special cases.
 Perhaps the best evidence for this difficulty is the long standing open problem of Sylvester in 1850 (in connection with Kirkman's famous Fifteen Schoolgirls Problem \cite{Kirk1847}) which asks whether it is possible to find a $1$-factorization of $K_n^h$ (see the next section for definitions). It took 120 years before Baranyai finally  settled this conjecture \cite{Baran75}. 
After Baranyai's proof appeared, in 1976 Cameron \cite{PJCameron76} asked the following question:

\begin{itemize}
\item [] Under what conditions can partial 1-factorizations of $K_m^h$ be extended to 1-factorizations of $K_n^h$?
\end{itemize}
\noindent This problem is wide open and to the authors$'$ best knowledge, the only partial results address the very special case of embedding  a $1$-factorization of $K_m^h$ into a  $1$-factorization of $K_n^h$ \cite{BaranBrouwer77, HaggHell93}.

Here we make some progress toward settling this problem, considering the  following related general embedding problem that is natural in their own right. When can a   hyperedge-coloring of a given hypergraph $\scr G$ on $m$ vertices be embedded into a hyperedge-coloring of $K_n^3$ in such a way that each color class forms an $r$-factor? So the special case when $r = 1$ and $\scr G=K_m^h$ addresses the Cameron question in the situation where the given partial 1-factors are all defined on a set of $m$ vertices.

 In Section \ref{embedfactorizationcor1},  we assume that precisely the hyperedges of size 3 on $m$ vertices have been colored; that is, the given hypergraph is $\scr G=K_m^3$), giving a complete solution if $n> 2m+(-1+\sqrt{8m^2-16m-7})/2$ (see Theorem \ref{facembedgen1}).  
Lemma \ref{lowerboundmn} then shows that Theorem \ref{facembedgen1} is not true if this bound on $n$ is replaced by $n \geq 2m-1$. In Section \ref{embedrestrfac} we assume that not only the hyperedges of size 3 are colored, but so are all the ``pieces" of hyperedges of $K_n^3$ that contain one or two of the given $m$ vertices (i.e. $n-m$ and $\binom{n-m}{2}$ copies of the hyperedges in $K_m^2$ and $K_m^1$, respectively); these pieces are built up to hyperedges of size 3 when the new vertices are added. In this case the problem is completely solved in Section \ref{embedrestrfac}, providing necessary and sufficient conditions (see Theorem \ref{restricembedth1}).   

The results in this paper supplement embedding results for graphs. Such results typically take a given edge-coloring of all the edges of a smaller complete graph and extend it to an edge-coloring of all the edges of a bigger complete graph in such a way that each color class is one of a given family of graphs. Hilton \cite{H2} used amalgamations to solve the  problem of embedding an edge-coloring of $K_m$ into a  Hamiltonian decomposition of $K_n$. This was later generalized by Nash-Williams \cite{Nash87}. Hilton and Rodger \cite{HR} considered the embedding problem for Hamiltonian decompositions of complete multipartite graphs. For embeddings of factorizations  in which connectivity is also addressed, see \cite{HJRW,  MatJohns, RW}.

It is worth remarking that embeddings of combinatorial structures with the same flavor as results found in this paper have  a long history. For example, 
in his 1945 paper \cite{MHall45},  Hall proved that every $p\times n$ latin rectangle on $n$ symbols can be embedded in a latin square of size $n$. Following this classic embedding theorem, in 1951 Ryser generalized Hall's result to $p\times q$ latin rectangles on $n$ symbols \cite{HJRyster51}. Ryser's result is equivalent to embedding a proper edge-coloring of the complete bipartite graph $K_{p,q}$ into a $1$-factorization of $K_{n,n}$. 
Doyen and Wilson \cite{DoyWilson73} solved the embedding problem for  Steiner triple systems ($K_3$-decompostions of $K_n$), then Bryant and Horsley \cite{BryHors09} addressed the embedding of partial designs, proving Lindner's conjecture \cite{Lindner75} that any partial Steiner triple system of order $u$, $PSTS(u)$,  can be embedded in an $STS(v)$ if $v \equiv 1,3 \pmod 6$ and $v \geq 2u + 1$. ($2u+1$ is best possible in the sense that for all $u\geq 9$ there exists a $PSTS(u)$ that can not be embedded in an $STS(v)$ for any $v< 2u+1$.)

 \section {Detachments of amalgamated hypergraphs} \label{prelimd}
For the purpose of this paper, a \textit {hypergraph} $\scr{G}$ is an ordered quintuple $(V(\scr{G}),  E(\scr{G}), H(\scr{G}),$ $\psi, \phi)$ where $V(\scr{G}),  E(\scr{G}),  H(\scr{G})$ are disjoint finite sets, $\psi:H(\scr{G}) \rightarrow V(\scr G)$ is a function and  $\phi: H(\scr G) \rightarrow E(\scr G)$ is a surjection.  
  Elements of $V(\scr{G}), E(\scr{G}), H(\scr{G})$ are called \textit{vertices}, \textit{hyperedges} and \textit{hinges} of $\scr G$, respectively. 
A vertex $v$ (hyperedge $e$, respectively) and hinge $h$ are said to be \textit{incident} with each other if $\psi(h)=v$ ($\phi(h)=e$, respectively). A hinge $h$ is said to \textit{attach} the hyperedge $\phi(h)$ to the vertex $\psi(h)$. In this manner, the vertex $\psi(h)$ and the hyperedge $\phi(h)$ are said to be \textit{incident} with each other. If $e\in E(\scr G)$, and $e$ is incident with $n$ hinges $h_1, \ldots, h_n$ for some $n\in \mathbb N$ ($\mathbb N$ is the set of positive integers), then the hyperedge $e$ is said to \textit{join} (not necessarily distinct) vertices $\psi(h_1), \ldots, \psi(h_n)$. 
 If $v\in V(\scr G)$, then the number of hinges incident with $v$ is  called the \textit{degree} of $v$ and is denoted by $d_{\scr G}(v)$.  The number of distinct vertices incident with a hyperedge $e$, denoted by $|e|$, is called the \textit{size} of $e$.
If for all hyperedges $e$ of $\scr G$, $|e|\leq 2$ and $|\phi^{-1}(e)|=2$, then $\scr G$ is a \textit{graph}.

Note that a hypergraph as defined here corresponds to a hypergraph as usually defined providing hyperedges are allowed to contain vertices multiple times. We imagine each hyperedge of a hypergraph to be attached to the vertices which it joins by in-between objects called hinges. 
A hypergraph may be drawn as a set of points representing the vertices. A hyperedge is represented by a simple closed curve enclosing its incident vertices. A hinge is represented by a small line attached to the vertex incident with it (see Figure \ref{figure:hypexample1}). Any undefined term may be found in \cite{Bahhyp1}. 

If $\scr F=(V,  E, H, \psi, \phi)$ is a hypergraph and $\Psi$ is a function from $V$ onto a set $W$, then we shall say that the hypergraph $\scr G=(W,  E, H, \Psi \circ \psi, \phi)$ is an \textit{amalgamation} of $\scr F$ and that $\scr F$ is a \textit{detachment} of $\scr G$. Associated with $\Psi$ ($\Psi$ is called the  {\it amalgamation function}) is the \textit{number function} $g:W\rightarrow \mathbb N$  defined by $g(w)=|\Psi^{-1}(w)|$, for each $w\in W$; being more specific, we may also say that $\scr F$ is a \textit{g-detachment} of $\scr G$. Intuitively speaking, a $g$-detachment of $\scr G$ is obtained by splitting each $u\in V(\scr G)$ into $g(u)$ vertices. 
 Thus $\scr F$ and $\scr G$ have the same hyperedges and hinges, and each vertex $v$ of $\scr G$ is obtained by identifying those vertices of $\scr F$ which belong to the set $\Psi^{-1}(v)$. In this process, a hinge incident with a vertex $u$ and a hyperedge $e$ in $\scr F$ becomes incident with the vertex $\Psi(u)$ and the  hyperedge $e$ in $\scr G$. Since two hypergraphs $\scr F$ and $\scr G$ related in the above manner have the same hyperedges, coloring the hyperedges of one of them is the same thing as coloring the hyperedges of the other. Hence an amalgamation of a hypergraph with colored hyperedges is a hypergraph with colored hyperedges. 

\begin{example}\label{hyp1ex}
\textup{
 Let $\scr F=(V,  E, H, \psi, \phi)$, with $V=\{v_i: 1\leq i\leq 8\}, E=\{e_1,e_2, e_3\}, H=\{h_i: 1\leq i\leq 9\}$, such that for $1\leq i\leq 8$, $\psi(h_i)=v_i$ , $\psi(h_9)=v_6$ and $\phi(h_1)= \phi(h_2)=\phi(h_3)=e_1,\phi(h_4)=\phi(h_5)=\phi(h_6)=e_2, \phi(h_7)=\phi(h_8)=\phi(h_9)=e_3$. Let $\Psi:V\rightarrow \{w_1,w_2,w_3,w_4\}$ be the function with $\Psi(v_1)=\Psi(v_2)=\Psi(v_3)=w_1$, $\Psi(v_4)=w_2$, $\Psi(v_5)=\Psi(v_6)=w_3$, $\Psi(v_7)=\Psi(v_8)=w_4$. The hypergraph $\scr G$ is the $\Psi$-amalgamation of $\scr F$ (see Figure \ref{figure:hypexample1}). 
\begin{figure}[htbp]
\begin{center}
\scalebox{.7}{ \input {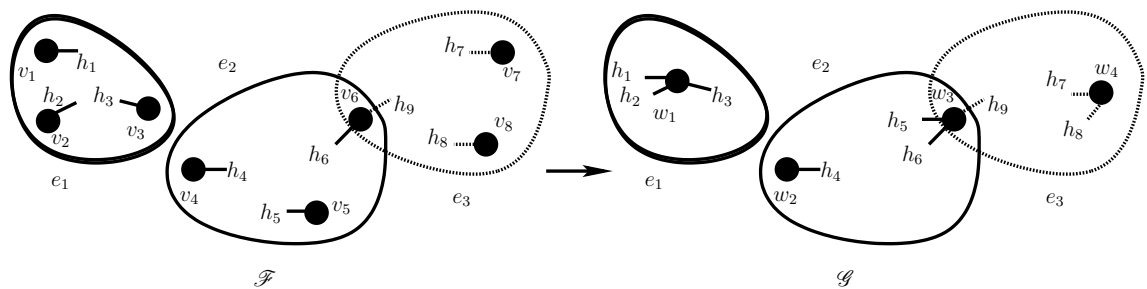} }
\caption{A visual representation of a hypergraph $\scr F$ with an amalgamation $\scr G$  }
\label{figure:hypexample1}
\end{center}
\end{figure} 
}\end{example}
A hypergraph $\scr G$ is said to be \textit{$r$-regular} if every vertex has degree $r$. An \textit{$r$-factor} of $\scr G$ is an $r$-regular spanning sub-hypergraph of $\scr G$. An \textit{$r$-factorization} is a set of $r$-factors whose edge sets partition $E(\scr G)$. A hypergraph $\scr G$ is said to be \textit{$k$-uniform} if $|e|=|\phi^{-1}(e)|=k$ for each $e\in E(\scr G)$ (so each vertex in a hyperedge is attached to it by exactly one hinge). A $k$-uniform hypergraph with $n$ vertices is said to be \textit{complete}, denoted by $K_n^k$, if every $k$ distinct vertices are joined exactly by one hyperedge. 
The sub-hypergraph of $\scr G$ induced by the color class $j$ is denoted by $\scr G(j)$. 

In the remainder of this paper, all hypergraphs are either 3-uniform or are amalgamations of 3-uniform hypergraphs. This implies that for every hypergraph $\scr G$ we have 
\begin{equation} \label{edgeassump}
1\leq |e|\leq |\phi^{-1}(e)|=3  \mbox{ for every  } e \mbox{ in } \scr G. 
\end{equation}
If $u,v,w$ are three (not necessarily distinct) vertices of $\scr G$, then $m(u,v,w)$ denotes the number of  hyperedges that join  $u$, $v$, and $w$. For convenience, we let  $m(u^2, v)=m(u, u, v)$, and  $m(u^3)=m(u, u, u)$. If we think of an edge as a multiset, then $m(u^2, v)$ (or $m(u^3)$) counts the multiplicity of an edge of the form $\{u,u,v\}$ (or $\{u,u,u\}$, respectively).

For the purpose of this paper, we need the following result which is a special case of both Theorem {\bf 3.1} in \cite{Bahhyp1}, and Theorem {\bf 4.1} in \cite{Bahhyp2bergJ}. To state it, some notation must be introduced. 
 
 For $g:V(\scr F)\rightarrow \mathbb{N}$, we define the symmetric function $\tilde g:V^3(\scr F)\rightarrow \mathbb{N}$ such that for distinct $x,y,z\in V(\scr F)$,  $\tilde g(x,x,x)=\binom{g(x)}{3}$, $\tilde g(x,x,y)=\binom{g(x)}{2}g(y)$, and $\tilde g(x,y,z) =g(x)g(y)g(z)$. 
Also we assume that for each $x \in V(\scr F)$, $g(x) \leq 2$ implies $m_\scr{F} (x^3) = 0$, and $g(x)=1$ implies $m_\scr F (x^2,y)=0$ for every $y\in V(\scr F)$. If $x, y$ are real numbers, then $\lfloor x \rfloor$ and $\lceil x \rceil$ denote the integers such that $x-1<\lfloor x \rfloor \leq x \leq \lceil x \rceil < x+1$, and $x\approx y$ means $\lfloor y \rfloor \leq x\leq \lceil y \rceil$. 
\begin{theorem} \textup{(Bahmanian \cite[Theorem 3.1]{Bahhyp1})} \label {hyp1main}
Let $\scr F$ be a $k$-hyperedge-colored hypergraph and let $g$ be a function from $V(\scr F)$ into $\mathbb{N}$. Then there exists a $3$-uniform $g$-detachment $\scr G$ of $\scr F$ with amalgamation function $\Psi:V(\scr G)\rightarrow V(\scr F)$,  $g$  being the number function associated with $\Psi$, such that:
\begin{itemize}
\item [\textup{(A1)}] for each $x\in V(\scr F)$, each $u\in \Psi^{-1}(x)$ and each $j\in \{1,\ldots,k\}$
$$d_{\scr G(j)}(u) \approx \frac{d_{\scr F(j)}(x)}{g(x)}; \mbox{ and }$$
\item [\textup{(A2)}] for every $x,y,z\in V(\scr F)$,  with $g(x)\geq 3$ if $x=y=z$, and $g(x)\geq 2$ if $|\{x,y,z\}|=2$, and every triple of distinct vertices $u,v,w$ with  $u\in \Psi^{-1}(x)$, $v\in \Psi^{-1}(y)$ and $w\in \Psi^{-1}(z)$, 
$$m_\scr G(u, v, w) \approx \frac{m_\scr F(x,y,z)}{\tilde g(x,y,z)}. $$  
\end{itemize}
\end{theorem}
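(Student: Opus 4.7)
The statement is a special case of the more general detachment theorem from \cite{Bahhyp1, Bahhyp2bergJ}; here is how I would sketch a proof.

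The plan is to induct on $N := \sum_{x \in V(\scr F)}(g(x)-1)$, reducing to a sequence of \emph{simple detachments}, each of which splits a single vertex $x$ with $g(x)\geq 2$ into two new vertices $x'$ (with $g(x')=1$) and $x''$ (with $g(x'')=g(x)-1$). When $N=0$ the hypergraph $\scr F$ is already $3$-uniform and one takes $\scr G = \scr F$. For the inductive step it suffices to carry out one simple detachment producing an intermediate hypergraph $\scr F'$ that still satisfies \eqref{edgeassump} and the side conditions on $g'$ and $m_{\scr F'}$, and whose approximate degree and multiplicity bounds (measured against $g'$) together with those of the inductive detachment of $\scr F'$ compose to give (A1) and (A2) for $\scr G$ relative to $\scr F$. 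This telescoping works cleanly because $d_{\scr F(j)}(x)/g(x)$ splits as one share for $x'$ and $(g(x)-1)/g(x)$ for $x''$, and similarly $\tilde g(x,y,z)$ factors multiplicatively.

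To carry out a single simple detachment at $x$, I would encode the problem as an equitable edge-coloring of an auxiliary bipartite multigraph $B$. One side of $B$ has nodes labelled by the colors $1,\ldots,k$ together with nodes labelled by the hyperedges of $\scr F$ incident with $x$; the other side has two nodes, $x'$ and $x''$. Each hinge of $\scr F$ incident with $x$ becomes an edge of $B$ connecting the appropriate color-node and hyperedge-node to the pair $\{x',x''\}$ (made bipartite by a small gadget). A balanced $2$-coloring---assigning each hinge to $x'$ or $x''$ so that every vertex of $B$ is split as evenly as possible---yields a detachment satisfying (A1) and the per-triple part of (A2) within the required rounding. Existence of such a balanced assignment is guaranteed by the classical equitable edge-coloring theorems of de Werra and Nash-Williams.

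Additional care is needed for hyperedges of the form $\{x^3\}$ or $\{x^2,y\}$, since the several hinges of such an edge incident with $x$ must be routed to \emph{distinct} new vertices in order for $\scr G$ to be $3$-uniform; this is exactly why the theorem assumes $g(x)\leq 2 \Rightarrow m_\scr{F}(x^3)=0$ and $g(x)=1 \Rightarrow m_\scr F(x^2,y)=0$. In the single-detachment step one requires, for instance, that the two hinges of any $\{x^2,y\}$-edge receive different labels in $\{x',x''\}$, a constraint that can be built into $B$. The main obstacle, I expect, is the simultaneous enforcement of conditions (A1) and (A2): balancing color-degrees at $x$ does not automatically balance triple-multiplicities, and vice versa. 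The resolution is that both conditions become degree conditions at different vertex classes of the \emph{same} auxiliary graph $B$, so a single balanced edge-coloring handles them both at once, after which the induction completes the proof.
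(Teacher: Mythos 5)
The paper does not prove this statement at all: it is imported verbatim as a special case of Theorem~3.1 of \cite{Bahhyp1} and Theorem~4.1 of \cite{Bahhyp2bergJ}, so there is no internal proof to compare your attempt against. On its own terms, your sketch follows the methodology that is standard for detachment theorems of this kind: induct on $\sum_{x}(g(x)-1)$, perform one simple detachment at a time, and realize each split as a balanced assignment of the hinges at $x$ to $x'$ and $x''$. You also correctly identify the role of the standing hypotheses (condition~(\ref{edgeassump}) and the assumptions that $g(x)\leq 2$ forces $m_{\scr F}(x^3)=0$ and $g(x)=1$ forces $m_{\scr F}(x^2,y)=0$) in keeping the intermediate hypergraphs admissible.

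Two steps in your sketch are genuinely incomplete, however. First, the auxiliary bipartite graph $B$ you describe has nodes only for colors and for individual hyperedges incident with $x$. Equalizing degrees there controls (A1) and forces the two or three hinges of a $\{x^2,y\}$- or $\{x^3\}$-edge onto distinct new vertices, but it does not control the quantities in (A2), which aggregate over \emph{all} hyperedges joining $x$ to a fixed pair $\{y,z\}$ (or to $\{x,y\}$, or $\{x,x\}$). You would need further nodes of $B$ indexed by such vertex (multi)sets, and then an argument that degree constraints at color nodes, hyperedge nodes, and pair nodes can all be met simultaneously; these constraint families are not nested, which is precisely why Nash-Williams-style proofs replace the single bipartite graph by an equitable partition with respect to a laminar family of subsets of the hinge set. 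Your closing assertion that ``a single balanced edge-coloring handles them both at once'' is the crux, and as stated it is not justified by the construction you give. Second, the claim that the $\approx$ bounds telescope across induction steps is a real verification, not a formality: $\bigl\lfloor \lfloor a/p\rfloor /q\bigr\rfloor$ need not equal $\lfloor a/(pq)\rfloor$ in general, and one must check the arithmetic showing that a value in $\bigl[\lfloor a(g(x)-1)/g(x)\rfloor,\ \lceil a(g(x)-1)/g(x)\rceil\bigr]$, after a further $(g(x)-1)$-fold equitable split, still lands in $\bigl[\lfloor a/g(x)\rfloor, \lceil a/g(x)\rceil\bigr]$. Neither issue is fatal---both are resolved in the cited papers---but your sketch asserts rather than establishes them.
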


\section{Embedding partial hyperedge-colorings into factorizations}\label{embedfactorizationcor1}

In this section we completely solve the embedding problem in the case where all the hyperedges of size 3 on a set of $m$ vertices have been colored, providing $n$ is big enough. We then show that some lower bound on $n$ is needed, since the necessary conditions of Theorem \ref{facembedgen1} are not sufficient if $n = 2m - 1$.  
\begin{theorem} \label{facembedgen1}
Suppose that  $n> 2m+(-1+\sqrt{8m^2-16m-7})/2$. A $q$-hyperedge-coloring of $\scr F=K_m^3$ can be embedded into an $r$-factorization of $\scr G=K_{n}^3$ if and only if 
\begin{itemize}
\item [\textup {(i)}] $3\divides rn,$
\item [\textup {(ii)}]  $r \divides \binom{n-1}{2},$ 
\item [\textup {(iii)}] $q\leq\binom{n-1}{2}/r,$ and 
\item [\textup {(iv)}] $d_{\scr F(j)}(v)\leq r$ for each $v\in V(\scr F)$ and $1\leq j \leq q.$ 
\end{itemize}
\end{theorem}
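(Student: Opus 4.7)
Necessity is immediate: (i) each $r$-factor has $rn/3$ hyperedges; (ii) each vertex of $K_n^3$ has degree $\binom{n-1}{2}$, which must split into color classes of size $r$; (iii) the $q$ prescribed classes are among these $\binom{n-1}{2}/r$ classes; and (iv) every color class has maximum degree $r$. My plan for sufficiency is to apply Theorem \ref{hyp1main} to a suitable amalgamation. Let $\scr G^\ast$ be the hypergraph on $V(\scr F) \cup \{\alpha\}$ obtained from $K_n^3$ by identifying the $n-m$ vertices outside $V(\scr F)$ into a single new vertex $\alpha$. Its nonzero multiplicities are $m_{\scr G^\ast}(u,v,w)=1$ for distinct $u,v,w \in V(\scr F)$, $m_{\scr G^\ast}(u,v,\alpha)=n-m$, $m_{\scr G^\ast}(u,\alpha^2)=\binom{n-m}{2}$, and $m_{\scr G^\ast}(\alpha^3) = \binom{n-m}{3}$. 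With number function $g(v)=1$ on $V(\scr F)$ and $g(\alpha)=n-m$, these multiplicities equal $\tilde g$ on the nose, so condition (A2) automatically forces every triple of distinct vertices in a detachment to be joined by exactly one hyperedge, reconstructing $K_n^3$.

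It therefore suffices to extend the given $q$-hyperedge-coloring of $\scr F$ to a $q' := \binom{n-1}{2}/r$ coloring of $\scr G^\ast$ in which every $v \in V(\scr F)$ has degree $r$ and $\alpha$ has degree $(n-m)r$ in each color; then (A1), together with $g(\alpha) \mid (n-m)r$, forces each detached color class to be an $r$-factor. For each color $j$, writing $e_j, f_j, h_j$ for the numbers of color-$j$ hyperedges of shapes $\{u,v,\alpha\}$, $\{u,\alpha^2\}$, $\{\alpha^3\}$, the required coloring must satisfy the per-color identities $2e_j + f_j = mr - 3|E(\scr F(j))|$ and $e_j + 2f_j + 3h_j = (n-m)r$, the per-vertex identity that the color-$j$ hyperedges through $\alpha$ meeting $v$ account for exactly the deficit $r - d_{\scr F(j)}(v)$, and the global totals $\sum_j e_j = \binom{m}{2}(n-m)$, $\sum_j f_j = m\binom{n-m}{2}$, $\sum_j h_j = \binom{n-m}{3}$. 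I would first treat the $q$ ``old'' colors, picking integer triples $(e_j,f_j,h_j)$ and a per-vertex split that absorbs the deficiencies $r - d_{\scr F(j)}(v)$, and then distribute the remaining hyperedges through $\alpha$ evenly among the $q' - q$ ``new'' color classes so that each original vertex receives degree exactly $r$ from each.

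The hard part will be showing that this assignment can always be realized, and this is precisely where the hypothesis $n > 2m + (-1+\sqrt{8m^2-16m-7})/2$ should enter: it should be exactly what is needed to guarantee enough ``slack'' in the $\{u,\alpha^2\}$ and $\{\alpha^3\}$ counts to absorb the worst-case residuals permitted by (iv) while keeping all $e_j, f_j, h_j$ nonnegative. I expect feasibility of the linear system above (over nonnegative reals) to reduce to a quadratic inequality in $n-m$ matching the stated bound; integrality can then be recovered by a careful rounding exploiting the freedom in the per-vertex split, and a Hall-type or greedy assignment --- made possible by the large multiplicities $n-m$, $\binom{n-m}{2}$, $\binom{n-m}{3}$ of the hyperedges through $\alpha$ --- then pins down which actual hyperedges receive which color. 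Once $\scr G^\ast$ is so colored, Theorem \ref{hyp1main} delivers a detachment that is the desired $r$-factorization of $K_n^3$ extending the coloring of $\scr F$.
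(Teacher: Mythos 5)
Your setup is exactly the paper's: the same amalgamation $\scr G^\ast$ (one new vertex of multiplicity $n-m$), the same multiplicity bookkeeping, and the same final appeal to Theorem \ref{hyp1main} via (A1) and (A2). The necessity argument and the global/per-color identities you write down are all correct. But the entire content of the theorem lies in the step you defer --- actually producing the extended coloring of the hyperedges through $\alpha$ --- and there is a genuine gap in how you propose to get it. You model feasibility as nonnegativity of a linear system in the aggregate counts $(e_j,f_j,h_j)$ plus ``a per-vertex split,'' but the hyperedges of shape $\{v,w,\alpha\}$ with $v,w\in V(\scr F)$ each raise the color-$j$ degree of \emph{two} old vertices at once, so absorbing the prescribed deficits $r-d_{\scr F(j)}(v)$ with these hyperedges is an equitable edge-coloring problem on $(n-m)K_m^2$ with vertexwise quotas, not a counting problem. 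A ``Hall-type or greedy assignment'' is not automatic here, and no rounding of a fractional solution to the aggregate system addresses this coupling.

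The paper resolves exactly this point with a decoupling you have not found. It first colors the $\{v,w,\alpha\}$ hyperedges greedily subject only to the \emph{upper} bound $d_{\scr F_1(j)}(v)\le r$; if some such hyperedge cannot be colored then $d_{\scr F_1(j)}(v)+d_{\scr F_1(j)}(w)\ge r$ for every $j$, and summing over the $k=\binom{n-1}{2}/r$ colors yields $2\binom{m-1}{2}+2(n-m)(m-1)-2\ge\binom{n-1}{2}$, i.e. $n\le 2m+(-1+\sqrt{8m^2-16m-7})/2$ --- a contradiction. This is where the hypothesis on $n$ actually enters, not (as you guessed) in slack for the $\{u,\alpha^2\}$ and $\{\alpha^3\}$ counts. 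Only then are the $\{v,\alpha^2\}$ hyperedges used: each meets a single old vertex, so the remaining deficits $r-d_{\scr F_1(j)}(v)$ can be filled independently vertex by vertex, using that each old vertex has total degree exactly $\binom{n-1}{2}=rk$ after this stage. Finally the $\{\alpha^3\}$ hyperedges receive $\ell_j=r(n/3-m)+f_j+2e_j$ apiece; nonnegativity of $\ell_j$ needs $n\ge 3m$, which is the second (and easier) place the hypothesis on $n$ is used. Without the greedy-then-top-up decoupling and the saturation counting argument, your sketch does not constitute a proof.
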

\begin{proof}
To prove the necessity, suppose that $\scr F$ with $V=V(\scr F)$ can be embedded into an $r$-factorization of $\scr G$. Since each edge contributes 3 to the the sum of the degrees of the vertices in an $r$-factor, $r | V(\scr G)|$ must be divisible by 3 which implies (i). Since each $r$-factor is an $r$-regular spanning sub-hypergraph and $\scr G$ is $\binom{n-1}{2}$-regular, we must have $r \divides \binom{n-1}{2}$, which is condition (ii). This $r$-factorization   requires  exactly $k=\binom{n-1}{2}/r$ colors which is condition (iii), and to be able to extend each color class to an $r$-factor  we need condition  (iv).

Now assume that conditions (i)--(iv) are true. By Baranyai's theorem \cite {Baran75}, the case of $m\leq 3$ is trivial, and so we may assume that $m\geq 4$. Let $e_j=|E\big(\scr F(j)\big)|$ for $1\leq j\leq k$.  In what follows, we extend the hyperedge-coloring of $\scr F$ into a $k$-hyperedge-coloring of an amalgamation of $\scr G$, and then we apply Theorem \ref{hyp1main} to obtain the detachment $\scr G$ in which each color class is an $r$-factor.  The hyperedges added in steps (I), (II), and (III)  correspond to the  hyperedges in $\mathscr G$ that contain one, two, and three new vertices, respectively. 
\begin{enumerate}[(I)]
\item Let $\scr F_1$ be a hypergraph formed by adding a new vertex $u$ and hyperedges  to $\scr F$ such that $m(u,v,w)=n-m$ for every pair of distinct vertices $v,w\in V$. Of course  the hyperedges in $E(\scr F)\cap E(\scr F_1)$ are already colored.  
We color greedily as many of the added $(n-m)\binom{m}{2}$ hyperedges as possible, ensuring  that $d_{\scr F_1(j)}(v)\leq r$ for $1\leq j\leq k$. 
Suppose there exists a hyperedge incident with $u$,$v$ and $w$ that is not colored. Then for $1\leq j\leq k$ either  $d_{\scr F_1(j)}(v)=r$ or $d_{\scr F_1(j)}(w)=r$, so $d_{\scr F_1(j)}(v)+d_{\scr F_1(j)}(w) \geq r$ for every $1\leq j\leq k$.
Therefore $2\binom{m-1}{2}+2(n-m)(m-1)-2=d_{\scr F_1}(v)+d_{\scr F_1}(w)-2 \geq \sum_{j=1}^k \big(d_{\scr F_1(j)}(v)+d_{\scr F_1(j)}(w)\big) \geq \sum_{j=1}^k r = kr=\binom{n-1}{2}$, in which the first inequality follows from that fact that at least one hyperedge incident with $v$ and $w$ is not colored. 
So, $2(m-1)(m-2)+4(n-m)(m-1)-4\geq (n-1)(n-2)$. Thus $n^2-4nm+n+2m^2+2m+2\leq  0$. So 
$$n\leq 2m+(-1+\sqrt{8m^2-16m-7})/2,$$
a contradiction. So all hyperedges can be colored greedily. Let $f_j$ be the number of  hyperedges of color $j$ in some such coloring for $1\leq j\leq k$.

\item Let $\scr F_2$ be a hypergraph formed by adding  $m\binom{n-m}{2}$ further hyperedges to $\scr F_1$ so that $m(u^2,v)=\binom{n-m}{2}$ for each $v\in V$.  Note that for each $v\in V$, 
\begin{eqnarray*} \label{rkdegverif}
d_{\scr F_2}(v)&=&\binom{m-1}{2}+ (m-1)(n-m)+\binom{n-m}{2} \nonumber \\
& =&\binom{n-1}{2}=rk.
\end{eqnarray*} 
Since $d_{\scr F_1(j)}(v)\leq r$ for $v\in V$ and $1\leq j\leq k$, to ensure  that $d_{\scr F_2(j)}(v)=r$, we color $r-d_{\scr F_1(j)}(v) (\geq 0)$  hyperedges incident with $v$ that were  added in forming $\scr F_2$ from $\scr F_1$ with color $j$ for each $v\in V$ and $1\leq j\leq k$. So the coloring we perform in this step results in all the newly added hyperedges being colored.  
Let $g_j$ denote the number of such hyperedges of color $j$ for $1\leq j\leq k$. 

\item Let $\scr F_3$ be the hypergraph formed by adding  $\binom{n-m}{3}$ further hyperedges to $\scr F_2$ so that $m(u^3)=\binom{n-m}{3}$. 
Let $\ell_j:=r(n/3-m)+f_j+2e_j$ for $1\leq j\leq k$. We claim that  $\ell_j\geq 0$ for $1\leq j\leq k$. To prove this, it is enough to show that $n\geq 3m$. 
Since $m\geq 4>(3+\sqrt{17})/2$, we have $m^2-3m-2\geq 0$. Therefore, $8m^2-16m-7\geq 4m^2-4m+1$, and thus $\sqrt{8m^2-16m-7}\geq 2m-1$, which implies   $(1+\sqrt{8m^2-16m-7})/2\geq m$, and consequently we have   $\lfloor (1+\sqrt{8m^2-16m-7})/2\rfloor \geq m$. Since $n> 2m+\lfloor(1+\sqrt{8m^2-16m-7})/2\rfloor$, we have $n\geq 3m$. 

Now we color the added hyperedges such that there are exactly $\ell_j$ further hyperedges colored $j$  for $1\leq j\leq k$. This is possible because
\begin{eqnarray*}
\sum_{j=1}^k \ell_j&=&\sum_{j=1}^k \big(r(n/3-m)+f_j+2e_j\big)\\
&= & rk(n/3-m)+\sum_{j=1}^k f_j+2\sum_{j=1}^k e_j\\
&= & \binom{n-1}{2}(n/3-m)+(n-m)\binom{m}{2}+2\binom{m}{3}\\
&=& n^3/6-n^2m/2-n^2/2+nm^2/2+nm\\
& & +\  n/3-m^3/6-m^2/2-m/3 \\
&= &  \binom{n-m}{3}=m_{\scr F_3}(u^3).
\end{eqnarray*}
Let us fix  $j\in \{1,\dots, k\}$. Since $d_{\scr F_3(j)}(v)=r$ for $v\in V$, we have 
\begin{equation} \label{degcalc12}
rm=\sum_{v\in V} d_{\scr F_3(j)}(v)=3e_j+2f_j+g_j.
\end{equation} 
On the other hand,   
\begin{eqnarray*}
d_{\scr F_3(j)}(u) & = & 3\ell_j+2g_j+f_j=r(n-3m)+3f_j+6e_j+2g_j+f_j \\
&=& r(n-3m)+4f_j+6e_j+2g_j.
\end{eqnarray*}
This together with (\ref{degcalc12}) implies that for $1\leq j\leq k$, 
$$d_{\scr F_3(j)}(u)=r(n-3m)+2rm=r(n-m).$$

\item Let $g:V(\scr F_3)\rightarrow \mathbb N$ be a function with $g(u)=n-m$, and $g(v)=1$ for each $v\in V$. By Theorem \ref{hyp1main}, there exists a 3-uniform $g$-detachment $\scr G^*$ of $\scr F_3$ with $n-m$ new vertices, say $u_1,\ldots, u_{n-m}$ detached from $u$ such that 
\begin{itemize}
\item $d_{\scr G^*(j)}(v)=d_{ \scr F_3(j)}(v)/g(v)=r/1=r$ and $d_{\scr G^*(j)}(u_i)=d_{ \scr F_3(j)}(u)/g(u)=r(n-m)/(n-m)=r$  for $1\leq i\leq n-m$ and $1\leq j\leq k$; 
\item $m_{\scr G^*}(u_i, u_{i'}, u_{i''})=m_{\scr F_3}(u^3)/\binom{g(u)}{3}=\binom{n-m}{3}/\binom{n-m}{3}=1$ for $1\leq i<i'<i''\leq n-m$; 
\item  $m_{\scr G^*}(u_i, u_{i'}, v)=m_{\scr F_3}(u^2,v)/\big(\binom{g(u)}{2}g(v)\big)=\binom{n-m}{2}/\binom{n-m}{2}=1$ for $1\leq i<i'\leq n-m$, and $v\in V$, and 
\item  $m_{\scr G^*}(u_i, v, w)=m_{\scr F_3}(u,v,w)/\big(g(u)g(v)g(w)\big)=(n-m)/(n-m)=1$ for $1\leq i\leq n-m$ and distinct  $v,w\in V$.
\end{itemize}
Therefore $\scr G^*\cong \scr G=K_n^3$ and each color class is an $r$-factor. This  completes the proof.
\end{enumerate}
\end{proof}
\begin{lemma} \label{lowerboundmn}
Conditions \textup {(i)--(iv)} of Theorem \ref{facembedgen1} are not sufficient if $n = 2m - 1$.
\end{lemma}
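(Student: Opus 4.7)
The plan is to exhibit an explicit small counterexample. Take $m=4$, $n=2m-1=7$, $r=3$, and $q=\binom{n-1}{2}/r=5$, and color all four hyperedges of $\scr F=K_4^3$ with a single color, say color $1$ (the colors $2,\dots,5$ remain unused on $\scr F$). The first step is simply verifying that this $q$-hyperedge-coloring satisfies all four conditions of Theorem \ref{facembedgen1}: (i) $3\mid rn=21$; (ii) $r=3\mid \binom{6}{2}=15$; (iii) $q=5=15/3$; and (iv) every vertex of $K_4^3$ has $d_{\scr F(1)}(v)=\binom{3}{2}=3=r$ and $d_{\scr F(j)}(v)=0$ for $j\geq 2$. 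For context I would also note that $n=7$ is well below the bound $2m+(-1+\sqrt{8m^2-16m-7})/2\approx 11.27$ in Theorem \ref{facembedgen1}, so this instance is not covered by that theorem and genuinely lives on the boundary claimed by the lemma.

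The main step is to show that no extension to a $3$-factorization of $K_7^3$ exists. In any such extension, color class $1$ must be a $3$-regular spanning sub-hypergraph of $K_7^3$, hence must contain exactly $3\cdot 7/3=7$ hyperedges. Write $V=\{v_1,\dots,v_4\}$ for the old vertices and $U=\{u_1,u_2,u_3\}$ for the $n-m=3$ new ones. Since $d_{\scr F(1)}(v_i)=r$ already holds for each $v_i\in V$, no additional color-$1$ hyperedge in the extension may be incident with any $v_i$. Therefore the remaining $7-4=3$ color-$1$ hyperedges are forced to lie entirely inside $U$. But $K_7^3$ is simple, so $U$ supports only the single hyperedge $\{u_1,u_2,u_3\}$, which contradicts both the count and the requirement that each $u_i$ acquire color-$1$ degree $3$ from within $U$ alone.

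There is no genuinely hard step here; the proof is a verification, so the write-up would be short. The conceptual content is that when $n=2m-1$ the set $V(\scr G)\setminus V(\scr F)$ has only $m-1$ vertices, and this is just small enough that a color class saturating every vertex of $V(\scr F)$ in $\scr F$ cannot be completed to an $r$-factor on $V(\scr G)\setminus V(\scr F)$ alone; the local conditions (i)--(iv) do not see this global obstruction. The one point worth being careful about is ensuring the divisibility conditions line up at the chosen parameters, which is why $m=4$ (where $\binom{m-1}{2}=3$ divides $\binom{2m-2}{2}=15$ and $3\mid n$) is the natural pick.
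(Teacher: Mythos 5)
Your proof is correct and uses essentially the same obstruction as the paper: once every vertex of $V(\scr F)$ is already saturated in some color class, the rest of that $r$-factor must live entirely on the $n-m$ new vertices, which is impossible when $n-m<m$. The only difference is that you instantiate this at the single example $m=4$, $n=7$, $r=3$, whereas the paper runs the same counting argument for general $m$ with any $r$ satisfying $3\divides r$ and $r\divides m-1$; either version suffices for the lemma as stated.
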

\begin{proof}
Suppose that the hyperedge-coloring of $K_m^3$ induces an $r$-factorization. 
Then in the embedding, the sub-hypergraph of $K_n^3$ on the new $n-m$ vertices induced by the hyperedges having the original colors clearly has an $r$-factorization (each of the colors induces an $r$-factor).  Therefore $n-m\geq m$, or equivalently $n\geq 2m$. So if $r$ is chosen so that $3 \divides r$ and $r\divides m -1$, then it is easy to check that conditions (i)--(iv) of Theorem \ref{facembedgen1} are satisfied when $n = 2m -1$, yet no embedding is possible.
\end{proof}

\section{extending restrictions of partial edge-colorings} \label{embedrestrfac}
If every hyperedge $e$ of the hypergraph $\scr G$ is replaced with $\lambda$ ($\geq 2$) copies of $e$ then denote the resulting (multi) hypergraph by $\lambda \scr G$. If $\scr G_1, \dots, \scr G_t$ are hypergraphs on the vertex set $V$ with edge sets $E(\scr G_1)\dots, E(\scr G_t)$ respectively, then let $\bigcup_{i=1}^t \scr G_i$ be the hypergraph with vertex set  $V$ and edge set $\bigcup_{i=1}^t E(\scr G_i)$. 

In this section we completely solve the embedding problem in the case where all the hyperedges in $\scr F=K_m^3\cup (n-m)K_m^2 \cup \binom{n-m}{2} K_m^1$ on a set of $m$ vertices have been colored, regardless of the size of $n$. One can think of the given colored hyperedges as being all the ``pieces" of hyperedges on these $m$ vertices that are eventually extended to hyperedges of size 3 by adding the new $n - m$ vertices during the embedding process.

Let $E^i(\scr G (j))$ denote the set of hyperedges of size $i$ and color $j$ in $\scr G$. 
\begin{theorem} \label{restricembedth1}
A $k$-hyperedge-coloring of $\scr F=K_m^3\cup (n-m)K_m^2 \cup \binom{n-m}{2} K_m^1$ with $V=V(\scr F)$ can be extended to an $r$-factorization of $\scr G=K_n^3$ if and only if
\item [\textup {(i)}] $3 \divides rn,$
\item [\textup {(ii)}]  $r \divides \binom{n-1}{2},$ 
\item [\textup {(iii)}] $k=\binom{n-1}{2}/r$,  
\item [\textup {(iv)}] $d_{\scr F(j)}(v)= r$ for each $v\in V$ and $1\leq j \leq k$, and 
\item [\textup {(v)}] $|E^2(\scr F(j))|+2|E^3(\scr F(j))|\geq r(m-n/3)$ for $1\leq j \leq k$.
\end{theorem}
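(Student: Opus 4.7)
The plan is to mirror the amalgamation approach in the proof of Theorem \ref{facembedgen1}, but here the given coloring of $\scr F$ already prescribes the color of every ``piece'' of every hyperedge of $K_n^3$ that meets $V$. Consequently, only the $\binom{n-m}{3}$ hyperedges entirely on the $n-m$ new vertices remain to be colored, and the count in each color is forced by a degree condition at the amalgamated vertex. No greedy step is required, so no lower bound on $n$ is needed.

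For necessity, conditions (i)--(iv) follow exactly as in the proof of Theorem \ref{facembedgen1}. For (v), assume the coloring extends to an $r$-factorization of $K_n^3$ and, for each color $j$, let $e^i_j$ denote the number of color-$j$ hyperedges of $K_n^3$ with exactly $i$ vertices in $V$ (so $e^i_j = |E^i(\scr F(j))|$ for $i=1,2,3$, while $e^0_j$ counts color-$j$ hyperedges on the new vertices alone). Summing the degree of color $j$ over $V$ yields $3e^3_j + 2e^2_j + e^1_j = rm$, while summing over the $n-m$ new vertices yields $e^2_j + 2e^1_j + 3e^0_j = r(n-m)$. Eliminating $e^1_j$ and using $e^0_j \ge 0$ gives condition (v).

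For sufficiency, I form an amalgamation $\scr F^*$ of $K_n^3$ from $\scr F$ by adjoining a new vertex $u$, reinterpreting each size-2 piece $\{v,w\}$ of $\scr F$ as a hyperedge with one hinge at $u$, each size-1 piece $\{v\}$ as a hyperedge with two hinges at $u$, and adjoining $\binom{n-m}{3}$ new (as yet uncolored) hyperedges of the form $\{u,u,u\}$. A direct count gives $m_{\scr F^*}(u,v,w)=n-m$, $m_{\scr F^*}(u^2,v)=\binom{n-m}{2}$, and $m_{\scr F^*}(u^3)=\binom{n-m}{3}$, so $\scr F^*$ is an amalgamation of $K_n^3$ under the number function $g(u)=n-m$, $g(v)=1$ for $v \in V$; indeed this is precisely the hypergraph $\scr F_3$ from the proof of Theorem \ref{facembedgen1}. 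The key step is to assign
\[ \ell_j := \frac{r(n-m) - |E^2(\scr F(j))| - 2|E^1(\scr F(j))|}{3} \]
new $\{u,u,u\}$ hyperedges to color $j$ for $1 \le j \le k$. I would verify: (a) $\ell_j \in \mathbb{N}\cup\{0\}$ is an integer, using (i) together with the identity $3e^3_j + 2e^2_j + e^1_j = rm$ reduced modulo 3; (b) $\ell_j \ge 0$, which after the same substitution reduces precisely to condition (v); and (c) $\sum_{j=1}^k \ell_j = \binom{n-m}{3}$, a routine algebraic check using (ii) and (iii) together with the fact that $\sum_j |E^i(\scr F(j))|$ equals the total number of size-$i$ pieces of $\scr F$. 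These ensure that $d_{\scr F^*(j)}(u) = r(n-m)$ for each $j$, while $d_{\scr F^*(j)}(v) = r$ for $v \in V$ is condition (iv).

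Finally, I apply Theorem \ref{hyp1main} with the function $g$ above to obtain a 3-uniform $g$-detachment $\scr G^*$ in which $m_{\scr G^*}(u_i,u_{i'},u_{i''}) = m_{\scr G^*}(u_i,u_{i'},v) = m_{\scr G^*}(u_i,v,w) = 1$ for all distinct new vertices $u_i,u_{i'},u_{i''}$ and all $v,w \in V$, so that $\scr G^* \cong K_n^3$, and each color class has degree $r$ at every vertex and is therefore an $r$-factor. The only real obstacle is the bookkeeping surrounding $\ell_j$: once its integrality, nonnegativity, and sum are verified, Theorem \ref{hyp1main} does all the combinatorial heavy lifting.
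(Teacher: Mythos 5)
Your proposal is correct and follows essentially the same route as the paper: the same double-count over $V$ and over the new vertices for the necessity of (v), the same amalgamated hypergraph with a single vertex $u$ absorbing the $n-m$ new vertices, the same forced count $\ell_j=r(n/3-m)+|E^2(\scr F(j))|+2|E^3(\scr F(j))|$ of color-$j$ triples on $u$ (your formula is this one rewritten via $3e^3_j+2e^2_j+e^1_j=rm$), and the same application of Theorem \ref{hyp1main}. Your explicit verification of the integrality of $\ell_j$ via (i) is a small point the paper leaves implicit, but it is not a different method.
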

\begin{proof} First, suppose that $\scr F$ can be embedded into an $r$-factorization of $\scr G$. The necessity of (i)--(iv) follow as described in the proof of Theorem \ref{facembedgen1}; equalities in this result replace the inequalities there because the colors of all hyperedges restricted to $\scr F$ have been prescribed in this case. Let us fix $j\in \{1,\dots,k\}$. Let $e_j, f_j, g_j$, and $\ell_j$ be the number of  hyperedges in $E(\scr G(j))$ that are incident with exactly 3, 2, 1 and 0 vertices  in $V$, respectively. It is easy to see that  $e_j=|E^3(\scr F(j))|$ and $f_j=|E^2(\scr F(j))|$. Since $r(n-m)=3\ell_j+2g_j+f_j$, and $rm=g_j+2f_j+3e_j$, we have $r(n-3m)=3\ell_j-3f_j-6e_j$, and thus $\ell_j=r(n/3-m)+f_j+2e_j$, but since $\ell_j\geq 0$, we must have $2e_j+f_j\geq r(m-n/3)$. This proves (v). 

To prove the sufficiency, assume that conditions (i)--(v) are true. Let $\scr F'$ be a hypergraph formed by adding a new  vertex $u$ to $\scr F$ with $m(u^3)=\binom{n-m}{3}$,  and extending each hyperedge of size one or two to a hyperedge incident with $u$ of size two or three, respectively. We extend the hyperedges of size one (two, respectively) such that $u$ is incident with two (one, respectively) hinges within that hyperedge. Ignoring colorings, $\scr F'$ is isomorphic to $\scr F_3$ in the proof of Theorem \ref{facembedgen1}, and  $\scr F'$ is an amalgamation of $\scr G$. We color $r(n/3-m)+f_j+2e_j$ of the new hyperedges with color $j$. This coloring results in all the newly added hyperedges being colored. 
The rest of the proof is identical to part (IV) of Theorem \ref{facembedgen1}. 
\end{proof}

\bibliographystyle{model1-num-names}
\bibliography{<your-bib-database>}

\end{document}